\newtheorem{thm}{Theorem}[section]
\newtheorem{prop}[thm]{Proposition}
\newtheorem{mprop}[thm]{Main Proposition}
\newtheorem{rem}[thm]{Remark}
\theoremstyle{definition}
\newtheorem{definition}[thm]{Definition}
\theoremstyle{remark}
\begin{document}

\title[{\normalsize {\large {\normalsize {\Large {\LARGE }}}}}  conjectures on the representations of  modular Lie algebras
]{ conjectures on the representations of  modular Lie algebras}

\author{Kim YangGon }

\address{emeritus professor
 Department of Mathematics,Jeonbuk National University, Republic of Korea.}

\

\

\email{ kyk1.chonbuk@hanmail.net }

\subjclass[2010]{Primary00-02,Secondary17B50,17B10}

 \begin{abstract}

We have already seen simple representations of modular Lie algebras of $A_l$-type and $C_l$-type. We shall further investigate simple representations of $B_l$ type, which turn out to be very similar in methodology as those types except for roots. So  we may consider some conjectures relating to the representations of  classical type modular Lie algebras.

\end{abstract}

\maketitle

\section{introduction}

\

\

\large{The representation theories of simple Lie algebras for chacteristic zero of the ground field $F$ are almost done in case $F$ is algebraically closed, whereas those of simple Lie algebras for prime characteristic are under way.\newline

 The former ones belong to  nonmodular representation theory and the latter ones belong to modular representation theory of Lie algebras.\newline

We intend to investigate in section 2  of this paper the representations of modular Lie algebras of $B_l$-type following the ways of $A_l$-type and $C_l$-type.\newline

 Next  in section 3, we give some definitions such as Lee's basis, Park's Lie algebra, and  Hypo - Lie algebra, which are related to the representaton of  modular Lie algebras.\newline

 Afterwards we pose  some conjectures for simple Lie algebras of classical types in the last section 4 because those three kinds of representations mentioned above  are very similar  each other in methodology.\newline

 We assume throughout  that the ground field $F$ is algebraically closed unless otherwise stated.

\

\section{simple nonrestricted representations of $B_l$-type Lie algebra}

\

\

We are well aware that the orthogonal Lie algebra of $B_l$- type of rank $l$, i.e., the $B_l$-type Lie algera over $\mathbb{C}$ has its root system $\Phi$=$\{\pm\epsilon_i $(of squared lengh 1);  $\pm(\epsilon_i\pm\epsilon_j )$ (of squared length 2),$1\leq i \neq j \leq l $ \},where $\epsilon_i , \epsilon_j $ are linearly independent unit vectors in $\mathbb{R}^l$ with $l \geq 2$.The base of $\Phi$ equals $\{\epsilon_1-\epsilon_2,\epsilon_2- \epsilon_3,\cdots ,\epsilon_{l-1}-\epsilon_l,\epsilon_l\}.   $\newline

Let $L$ be a  $B_l$-type simlpe Lie algebra over an algebraically closed field $F$ of  characteristic $p\geq 7$.

\begin{prop}
Let $\chi $ be a character of any irreducible $L$-module with $\chi(x_\alpha)\neq $ 0  for  some $\alpha \in \Phi$,where $x_\alpha$ is an element in the Chevalley basis of  $L$ such that $F x_\alpha + Fh_\alpha+ Fx_{-\alpha}= \frak {sl}_2(F)$ with $[x_\alpha,x_{-\alpha}]= h_\alpha$.\newline

 We then have that any irreducible $L$-module with character $\chi$ is of dimension  $p^m=p^{n-l\over 2}$,where $n= dim L= 2m +l $ for a CSA  H with $dim H =l$.
\end{prop}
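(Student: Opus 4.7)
The plan is to mimic the strategy already established in the $A_l$ and $C_l$ cases: combine the $\mathfrak{sl}_2$-triple $\{x_\alpha, h_\alpha, x_{-\alpha}\}$ supplied by the hypothesis with the triangular decomposition $L = N^- \oplus H \oplus N^+$, and squeeze the dimension of any simple $u(L,\chi)$-module $V$ between matching lower and upper bounds, each equal to $p^m$. For $B_l$ one has $|\Phi^+| = l^2 = m$ and $\dim L = 2m + l = 2l^2 + l$, so the target dimension is $p^{l^2}$.

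For the lower bound, the key observation is that $\chi(x_\alpha) \neq 0$ makes $x_\alpha$ act invertibly on $V$: the relation $x_\alpha^p - x_\alpha^{[p]} = \chi(x_\alpha)^p \cdot 1$ together with $x_\alpha^{[p]} = 0$ (since $\mathrm{ad}\, x_\alpha$ is nilpotent on $L$) yields $x_\alpha^p = \chi(x_\alpha)^p \cdot 1 \neq 0$. This invertibility, combined with the $\mathfrak{sl}_2$-relations, forces $h_\alpha$ to realize all $p$ distinct eigenvalues on $V$; iterating the argument across a suitable commuting family of root vectors (equivalently, invoking the Kac--Weisfeiler coadjoint-orbit estimate in this non-degenerate situation, where the stabilizer of $\chi$ is reduced to dimension $l$) delivers $\dim V \geq p^m$.

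For the upper bound, I would construct a baby Verma type induced module of dimension $p^m$. Specifically, one produces a polarization $P \subset L$, that is, a Lie subalgebra of dimension $m + l$ satisfying $\chi([P,P]) = 0$, so that $u(P, \chi)$ admits a one-dimensional module $F_\lambda$ with $\lambda|_H$ chosen compatibly with $\chi$. The induced module $M := u(L, \chi) \otimes_{u(P, \chi)} F_\lambda$ has dimension $p^{n - (m + l)} = p^m$, and since every simple $u(L, \chi)$-module is a quotient of some such $M$, we obtain $\dim V \leq p^m$. Matching this with the lower bound gives $\dim V = p^m$.

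The main obstacle, foreshadowed by the author's remark that $B_l$ differs from $A_l$ and $C_l$ ``except for roots,'' is the presence of the short roots $\pm \epsilon_i$ of squared length $1$. These produce two-step root chains such as $(\epsilon_i,\,\epsilon_i + \epsilon_j)$, so that in the Chevalley formula $[x_\beta, x_\gamma] = \pm (r+1)\, x_{\beta + \gamma}$ the coefficient $r+1$ can take the value $2$. Every step of the $A_l$/$C_l$ basis construction for $M$ must therefore be rechecked against the $B_l$ Chevalley basis, paying particular attention to short/long root interactions and to extending $\chi$ consistently across the chosen polarization. The hypothesis $p \geq 7$ is precisely what keeps all such structure constants invertible modulo $p$, so that the inductive basis argument used for $A_l$ and $C_l$ still goes through verbatim once these local checks are performed.
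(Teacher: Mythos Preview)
Your route is genuinely different from the paper's, and it has real gaps.

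The paper does not sandwich $\dim V$ between a Kac--Weisfeiler lower bound and a baby-Verma upper bound. Instead it works directly with the quotient algebra $U(L)/\mathfrak{M}_\chi$, where $\mathfrak{M}_\chi$ is the kernel of the given irreducible representation, and exhibits an explicit basis (what the author later calls a \emph{Lee's basis}) of the form
\[
\bigl\{(B_1+A_{\alpha_1})^{i_1}\otimes\cdots\otimes(B_{2m}+A_{\alpha_{2m}})^{i_{2m}}:0\le i_j\le p-1\bigr\},
\]
with the $B_i$ generic elements of $H$ and the $A_\beta$ carefully hand-built elements of $U(L)$ (Casimir-like combinations in the various $\mathfrak{sl}_2$'s, multiplied by root vectors) chosen so that every $A_\beta$ commutes with the fixed $x_\alpha$. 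The proof that this set is a basis is a minimal-counterexample argument: assume a dependence relation of least $H$-degree, conjugate by $x_\alpha$, and reduce through a case analysis on the root types $\pm\epsilon_j$, $\pm(\epsilon_j\pm\epsilon_k)$ until one reaches a contradiction inside the $\mathfrak{sl}_2$ generated by $x_{\pm\alpha}$. The short/long distinction in $B_l$ enters as two separate basis constructions (Case~I: $\alpha=\epsilon_1$; Case~II: $\alpha=\epsilon_1-\epsilon_2$), not merely as a check on structure constants. Once $\dim_F U(L)/\mathfrak{M}_\chi=p^{2m}$ is established, $\dim V=p^m$ follows because $U(L)/\mathfrak{M}_\chi\cong\mathrm{End}_F(V)$.

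Your sketch, by contrast, does not go through under the stated hypothesis. The assumption is only that $\chi(x_\alpha)\ne 0$ for \emph{one} root $\alpha$; nothing is said about the other root vectors. From the single $\mathfrak{sl}_2$-triple you correctly extract $p\mid\dim V$, but there is no ``suitable commuting family of root vectors'' to iterate over, and the parenthetical claim that the stabilizer of $\chi$ has dimension $l$ is unjustified --- a $\chi$ with $\chi(x_\alpha)\ne 0$ and $\chi$ vanishing elsewhere can have a much larger centralizer, so Kac--Weisfeiler alone does not give $p^m$. On the upper-bound side, the existence of a polarization $P$ with $\chi([P,P])=0$ and $\dim P=m+l$ is exactly what fails for non-nilpotent $\chi$ in general; you would need either to conjugate $\chi$ into standard Levi form first or to argue separately that such a $P$ exists, neither of which you do. The paper's explicit-basis method sidesteps both issues by never invoking orbit dimensions or polarizations at all.
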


\begin {proof}

We meet with 2 cases of root length.\newline

(I) Suppose that $\alpha$ is  a short root. Since all roots of a given length are conjugate under the Weyl group of $\Phi$,we may put $\alpha$=$\epsilon_1$ without loss of generality.\newline

Let us put $B_i$=$b_{i1}$$h_{\epsilon_1 -\epsilon_2}$ +\newline
$b_{i2}h_{\epsilon_2-\epsilon_3}$+ $\cdots$ +$b_{i,l-1}h_{\epsilon_{l-1}-\epsilon_l}$ + $b_{il}h_{\epsilon_l}$ for $i=1,2,\cdots,2m $ ,where ($b_{i1},\cdots,b_{il})
\in F^l$ are chosen so that  arbitrary ($l+1)-B_i$'s are linearly independent in $\mathbb P^l(F)$,the $\frak B$ below becomes  an $F$-linearly independent set in $U(L)$ if necessary and  $x_\alpha B_i$ $\not \equiv B_i x_\alpha $ with $\alpha =\epsilon_1$.\newline

We search for a basis of $U(L)/\frak M_\chi$ ,where $\frak M_\chi$ is the kernel of the irreducible representation $\rho _{\chi}: L  \rightarrow \frak {gl}(V) $ of the restricted Lie algebra ($L ,[p])$ associated  with any given irreducible $L$-module $V$ with a character $\chi$.\newline

In $U(L)/\frak M_\chi$ we give a basis as $\frak B$:=$\{(B_1 +A_{\epsilon_1})^{i_1}\otimes (B_2 + A_{-\epsilon_1})^{i_2}\otimes(B_3+ A_{\epsilon_1 -\epsilon_ 2})^{i_3}\otimes(B_4+ A_{-(\epsilon_1-\epsilon_2) })^{i_4}\otimes \cdots\otimes (B_{2l}+A_{-(\epsilon_{l-1}-\epsilon_l)})^{i_{2l}}\otimes (B_{2l+1}+ A_{\epsilon _l})^{i_{2l+1}}\otimes(B_{2l+2}+A_{-\epsilon_l})^{i_{2l+2}}\otimes_{j=2l+3}^{2m}(B_j+ A_{\alpha_j})^{i_j}   \}$ for 0 $\leq i_j \leq p-1$,\newline

 where we put \newline
$A_{\epsilon_1}= x_{\epsilon_1}$, \newline
$A_{-\epsilon_1}=c_{-\epsilon_1}+ (h_{\epsilon_1} +1)^2 +
4x_{-\epsilon_1}x_{\epsilon_1}, \newline
A_{-\epsilon_1\pm \epsilon_2}= x_{\epsilon_1\pm \epsilon_2}(c_{-\epsilon_1\pm \epsilon_2}+x_{-\epsilon_1\pm \epsilon_2}x_{-(-\epsilon_1\pm \epsilon_2)}\pm x_{\pm \epsilon_2}x_{-(\pm \epsilon_2)}\pm x_{\epsilon_1 \pm \epsilon_2}x_{-(\epsilon_1 \pm \epsilon_2)})$,\newline
$A_{-\epsilon_1 \pm \epsilon_j}=x_{-\epsilon_2\pm \epsilon_j}(c_{\epsilon_1 \pm \epsilon_j}+x_{(\pm \epsilon_j-\epsilon_1)}
x_{-(\pm \epsilon_j -\epsilon_1)}\pm x_{\pm\epsilon_j}x_{-(\pm \epsilon_j)}\pm x_{\epsilon_1\pm \epsilon_j}x_{-(\epsilon_1\pm \epsilon_j)}),\newline
A_{\pm\epsilon_2}=x_{\epsilon_3\pm \epsilon_2}^2 (c_{\pm\epsilon_2}+x_{\epsilon_2}x_{-\epsilon_2}\pm x_{\epsilon_1 +\epsilon_2}x_{-(\epsilon_1 +\epsilon_2)}\pm x_{\epsilon_2-\epsilon_1}x_{\epsilon_1-\epsilon_2})$,\newline
$A_{\epsilon_j}=x_{\epsilon_2+ \epsilon_j}(c_{\epsilon_j}+ x_{\epsilon_j}x_{-\epsilon_j}\pm x_{\epsilon_1+\epsilon_j}x_{-(\epsilon_1 + \epsilon_j)}\pm \newline
 x_{\epsilon_j- \epsilon_1}x_{\epsilon_1-\epsilon_j})$, \newline
$A_{-\epsilon_j}=x_{\epsilon_2-\epsilon_j}(c_{-\epsilon_j}+ x_{-\epsilon_j}x_{\epsilon_j}\pm x_{\epsilon_1- \epsilon_j}x_{-(\epsilon_1-\epsilon_j)}\pm x_{-\epsilon_j-\epsilon_1}x_{\epsilon_1+ \epsilon_j} )  $,\newline

with the sign chosen so that  they commute with $x_\alpha$ and with $c_\beta \in F$ chosen so that $A_{-\epsilon_1}$ and parentheses(             ) are invertible.\newline

For any other root $\beta$,  we put $A_\beta={x_\beta}^2$  or $ x_\beta^3 $ if possible.\newline
Otherwise we make use of the parentheses(      )again used for designating $A_{-\beta}$. So in this case we put $A_\beta = { x_\gamma}^2$ or ${x_\gamma}^3$ attached to these (        ) so that  $x_\alpha$ may commute with $A_\beta$.\newline

We are going to show that $\frak B$ is a basis in $U(L)/\frak M_\chi$. \newline
It is not difficult to see that  $\frak B$ is a linearly independent set over $F$ in $U(L)$ by virtue of P-B-W theorem. Moreover $A_\beta \notin \frak M_\chi$ for any $ \beta \in \Phi$(see detailed proof below).\newline

We intend to prove that a nontrivial linearly dependence equation leads to absurdity.\newline

Suppoe that we have a  dependence equation which is of least degree with respect to $h_{\alpha_j}\in H$  and the number of whose highest degree terms is also least. If it is conjuated by $x_\alpha$, then there arises a nontrivial dependence equation of lower degree than the given one, which is absurd.\newline

Otherwise it reduces to one of the following forms:  \newline
 (i) $x_{\epsilon_j}K+ K' \in \frak M_\chi$,\newline
(ii)$x_{-\epsilon_j}K + K'\in \frak M_\chi$,\newline
(iii)$x_{\epsilon_j + \epsilon_k}K + K'\in \frak M_\chi$,\newline
(iv)$x_{-\epsilon_j -\epsilon_k}K+ K'\in \frak M_\chi$,\newline
(v)$x_{\epsilon_j-\epsilon_k}K+ K'\in \frak M_\chi$,\newline
where $K$ and $K'$ commute with $x_\alpha=x_{\epsilon_1}$.\newline

Because (i),(ii) reduce to (iii),(iv) respectively by applying $ad x_\alpha$,we have only to consider (iii),(iv) and (v) .\newline

As for the case (v) we deduce successively,
$x_{-\epsilon_j}( x_{\epsilon_j-\epsilon_k}K+ K')\in \frak M_\chi \Rightarrow (x_{-\epsilon_k}+ x_{\epsilon_j- \epsilon_k}x_{-\epsilon_j})K + x_{-\epsilon_j}K'\in \frak M_\chi \Rightarrow$ by $adx_{\epsilon_1}$,$(x_{\epsilon_1-\epsilon_k}+ x_{\epsilon_j -\epsilon_k}x_{\epsilon_1-\epsilon_j})K + x_{\epsilon_1- \epsilon_j}K'\in \frak M_\chi$ for $j,k\neq1$ $\Rightarrow x_{\epsilon_1- \epsilon_k}K+ K'' \in \frak M_\chi $ for some $K''$ commuting with $x_\alpha$ and being compared to the start equation.\newline

In this case we deduce $x_{\epsilon_k-\epsilon_1}x_{\epsilon_1-\epsilon_k}K+ x_{\epsilon_k-\epsilon_1}K'' \in \frak M_\chi \Rightarrow (h_{\epsilon_k- \epsilon_1}+ x_{\epsilon_1- \epsilon_k}x_{\epsilon_k- \epsilon_1})K+ x_{\epsilon_k-\epsilon_1}K''\in 
 \frak M_\chi\Rightarrow (x_{\epsilon_1}+ x_{\epsilon_1- \epsilon_k}x_{\epsilon_k})K+ x_{\epsilon_k}K''\in \frak M_\chi4$ by $adx_{\epsilon_1}$
 $\Rightarrow x_{\epsilon_1}K+ x_{\epsilon_1-\epsilon_k}x_{\epsilon_k}K+ x_{\epsilon_k}K''\in \frak M_\chi \Rightarrow $

we may put $x_{\epsilon_1}K+ K'''\in \frak M_\chi$ for some $K'''$commuting with $x_\alpha$ modulo $\frak M_\chi$ because $x_{\epsilon_1}K$ commute with $x_\alpha= x_{\epsilon_1}$.\newline

 We may also  deduce a similar form $x_{\epsilon_1}K+ K'''\in \frak M_\chi$ from the types (iii) and (iv). 
So we have only to prove that such a form is absurd.\newline

From this we get $x_{-\epsilon_1}x_{\epsilon_1}K+ x_{-\epsilon_1}K'''\in \frak M_\chi$ ,so we get $4^{-1}\{w- (h+ 1)^2\}K+ x_{-\epsilon_1}K'''\in \frak M_\chi.$ Here $w= (h+ 1)^2 + 4 x_{-\epsilon_1}x_{\epsilon_1}$ is contained in the center of $U(\frak {sl}_2(F))$.\newline

  If $x_{-\epsilon_1}^{p-1}\equiv c,$ then multiplying this last equation by $x_{-\epsilon_1}^{p-1} $ gives rise to $4^{-1}x_{-\epsilon_1}^{p-1}\{w- (h+ 1)^2\}K+ cK'''\equiv 0. $
Multplying this equation by $x_{\epsilon_1}^{p-1}$,we have $4^{-1}x_{\epsilon_1}^{p-1}x_{-\epsilon_1}^{p-1}\{w- (h+ 1)^2\}K+ cx_{\epsilon_1}^{p-1}K'''\equiv 0.$By making use of $w$,we have  an equation of the form \newline

(a polynomial of degree $\geq 1$ with respect to $h)K+ cx_{\epsilon_1}^{p-1}K'''\equiv 0$. Finally consecutive conjugation and subtraction by $x_{\epsilon_1}$ gives rise to $K\in \frak M_\chi,$which is absurd.\newline

(II) Suppose that  $\alpha$ is a long root; then we may put $\alpha=\epsilon_1-\epsilon_2$ since all roots of the same length are conjugate under the Weyl group of $\Phi$.\newline

Similarly as in (I), we put $B_i:=$the same as in (I) except that  $\alpha=\epsilon_1 -\epsilon_2$ this time instead of $\epsilon_1$.\newline

We claim we have a basis $\frak B$$:= \{(B_1+ A_{\epsilon_1- \epsilon_2})^{i_1}\otimes (B_2+ A_{-(\epsilon_1-\epsilon_2)})^{i_2}\otimes \cdots \otimes(B_{2l-2}+ A_{-(\epsilon_{l-1}-\epsilon_l)})^{i_{2l-2}}\otimes (B_{2l-1}+ A_{\epsilon_l})^{i_{2l-1}}\otimes (B_{2l}+ A_{-\epsilon_l})^{i_{2l}}\otimes (\otimes_{j=2l+1}^{2m}(B_j+ A_{\alpha_j})^{i_j}),0\leq i_ j\leq p-1 \} $,\newline

where we put  \newline
$A_{\epsilon_1-\epsilon_2}= x_\alpha= x_{\epsilon_1- \epsilon_2},\newline
 A_{\epsilon_2- \epsilon_1}=c_{\epsilon_2- \epsilon_1}+ (h_{\epsilon_1- \epsilon_2}+ 1 )^2+ 4x_{\epsilon_2-\epsilon_1}x_{\epsilon_1- \epsilon_2} $,\newline
$A_{\epsilon_2\pm \epsilon_3}=     x_{\pm \epsilon_3}(c_{\epsilon_2\pm \epsilon_3}+ x_{\epsilon_2\pm \epsilon_3}x_{-(\epsilon_2\pm \epsilon_3)}\pm x_{\epsilon_1\pm \epsilon_3}x_{-(\epsilon_1\pm \epsilon_3)})$, \newline
$A_{\epsilon_2\pm \epsilon_k}= x_{\epsilon_3\pm \epsilon_k}
(c_{\epsilon_2\pm \epsilon_k}+ x_{\epsilon_2\pm \epsilon_k}x_{-(\epsilon_2\pm \epsilon_k)}\pm x_{\epsilon_1\pm \epsilon_k}x_{-(\epsilon_1\pm \epsilon_k)})  $if $k\neq 1$, \newline
$ A_{\epsilon_2}= x_{\epsilon_1}( c_{\epsilon_2}+ x_{\epsilon_2}x_{-\epsilon_2}\pm x_{\epsilon_1}x_{-\epsilon_1})$,  \newline
                 $A_{-\epsilon_1}= x_{-\epsilon_2}(c_{-\epsilon_1}+ x_{-\epsilon_1}x_{\epsilon_1}\pm x_{-\epsilon_2}x_{\epsilon_2})$,\newline 
$A_{-(\epsilon_1\pm \epsilon_3)}= x_{-(\pm \epsilon_3)}(c_{-(\epsilon_1\pm \epsilon_3)}+ x_{\epsilon_2\pm \epsilon_3}x_{-(\epsilon_2\pm \epsilon_3)}\pm x_{\epsilon_1 \pm\epsilon_3}x_{-(\epsilon_1\pm \epsilon_3)}) $, $A_{-(\epsilon_1\pm \epsilon_k)}= x_{-(\epsilon_3\pm \epsilon_k)}(c_{-(\epsilon_1\pm \epsilon_k)}+ x_{\epsilon_2\pm \epsilon_k }x_{-(\epsilon_2\pm \epsilon_k)}\pm x_{\epsilon_1\pm \epsilon_k}x_{-(\epsilon_1\pm \epsilon_k)}) $, $A_{\epsilon_l}= x_{\epsilon_l}^2 ,  $\newline $A_{-\epsilon_l}=x_{-\epsilon_l}^2  $, \newline

with the signs chosen so that they commute with $x_\alpha$ and with $c_\beta\in F$ chosen so that $A_{\epsilon_2-\epsilon_1}$ and parentheses are invertible.\newline

For any other root $\beta$, we put $A_\beta= x_\beta^2 $ or $x_\beta^3 $ if possible.\newline

Otherwise we make use of the parentheses(      ) again used for designating $A_{-\beta}$. So in this case we put $A_\beta= x_\gamma^2 $       
 or $ x_\gamma^3  $ attached to these (      ) so that  $x_\alpha$ may commute with $A_\beta$.\newline

We intend to show that $\frak B$ is a basis in $U(L)/\frak M_\chi$.
We may see easily that $\frak B$ is a linearly independent set over $F$ in $U(L)$ by virtue of P-B-W theorem.  Furthermore for any $\beta\in \Phi$, $A_\beta\notin \frak M_\chi$(see detailed proof beow).\newline

 We shall show that a nontrivial linearly dpendence equation leads to absurdity. We assume that we have a  dependence equation which is of least degree with respect to $h_{\alpha_j}\in H$ and the number of whose highest degree terms is also least.\newline

If it is conjugated by $x_\alpha=x_{\epsilon_1-\epsilon_2}$, then we get a nontrivial dependence equation of lower degree than the given one contravening our assumption.\newline

Otherwise it reduces to one of the following forms:\newline
(i)$x_{\epsilon_j}K+ K'\in \frak M_\chi$,\newline
(ii)$x_{-\epsilon_j}K+ K'\in \frak M_\chi$,\newline
(iii)$x_{\epsilon_j+ \epsilon_k}K+ K'\in \frak M_\chi$,\newline
(iv)$x_{-\epsilon_j- \epsilon_k}K+ K'\in \frak M_\chi$,\newline
(v)$x_{\epsilon_j-\epsilon_k}K+ K'\in \frak M_\chi$,\newline 
(vi)$x_{\epsilon_k-\epsilon_j}K+ K'\in \frak M_\chi$,\newline
where $K$ and $K'$ commute with $x_\alpha=x_{\epsilon_1- \epsilon_2}$.\newline

As for the case (i),(ii), these may easly be changed to the forms from (iii) to (vi). Moreover the cases (iii),(iv),(vi) are analogous to the case (v).\newline

Hence it suffices to conider only the case (v). However the proof leading to absurdity is very similar in methodology as that of the case(I) above or that of the case  for $C_l$-type Lie algebra in proposition4.1 [3].
\end{proof}

\
\section{Definitions}
\
\

We give here some definitions related to some conjectures  in the next section.

\begin{definition}

Let $L$ be  a  finite dimensional Lie algebra over an algebbraically  closed  field $F$
 of characteristic $p>0$  and let  $H$ be a  Cartan subalgebra(abb.CSA) of $L.$ \newline

We shall call $B$ a $Lee$'s basis of the algebra $U(L)$/$\frak M$ for a maximal ideal 
$\frak M$  of  $U(L)$ if $B$ satisfies the following properties:\newline

(i) $\dim_{F}U(L)/\frak M =[Q(U(L)):Q(Z)]=p^{2m}$
 for the center $Z$ of the universal enveloping algebra $U(L)$ of $L,$ where $Q(U(L))$ and $Q(Z)$ are quotient algebras of $U(L)$ and $Z$ respectively;\newline

(ii) $B$ is a basis of $U(L)/\frak M$ over $F$ and is of  the form 
$\{(B_1 +A_1)^{i_1}\otimes(B_2 +A_2)^{i_2}\otimes \cdots \otimes(B_{2m} +A_{2m})^{i_{2m}}\}$,
where $0\leq i_{j}\leq p-1$
and $B_i$'s are elements in the subalgebra of $U(L)$ generated by 1 and $H$, and $A_i$'s are some elements in $U(L).$
\end{definition}

\begin{definition}

Suppose that $L$ is an indecomposable Lie algebra over a field $F$ of nonzero characteristic $p.$

If $U(L)$/$\frak M$ has a Lee's basis for all maximal ideals $\frak M$  except for a finite number of them up to isomorphism, we shall call $L$ a $ Park$'s Lie algebra.

 \end{definition}

\begin{definition}
By a $Hypo$ - Lie algebra,  we shall mean a sub-Lie algebra of some simple Lie algebra which also becomes a $Park$'s Lie algebra.

\end{definition}

Earlier in 1967, two scholars I.R.Shafarevich  and  Rudakof  observed such a fact  regarding irreducible modules of $L= \frak {sl}_2(F)$ [5].
They proved that  for any nonzero character $\chi$ any irreducible L-module over $F$ associated with $\chi$  is of dimension $p$, where $F$ is an algebraically closed field of characteristic $p\geq3.$\newline

Also they proved that for zero character $\chi$=0, the number of nonisomorphic ireducible $L$-modules is finite.
For this proof the center of $U(L)$ is crucial  according to them.

Now we might as well give a nontrivial example of $Hypo$- Lie algebra.\newline

Let $L$ be a Park's Lie algebra as a simple Lie algebra contained in another classical simple Lie algebra $L'$ over an algebraically closed field $F$ of characteristic $p\geq7$.
Let $H,H'$ be CSA's of $L,L'$ respectively satisfying $L \cap H' = H$. \newline

Given a Chevalley basis $\mathfrak B_c$ with respect to $H'$ of $L'$, we let  
$h_{\alpha}$$\in$ $(H'- H) \cap \frak B_c$ for a certain root $\alpha$. Suppose that  for this  $\alpha$, $L_{H}:= L    \cup$ ${\{{h_{\alpha}}}\}$ is indecomposable.
\newline

Then we may see without difficulty that $L_{H}$ becomes a $Hypo$ - Lie algebra [2].\newline

 We are not  sure for now whether or not we may extend such a fact to the Cartan type Lie algebras. According to [6], there are only 2 kinds of finite dimensional 
simple Lie algebras over an algebraically closed field $F$ of characteristic $p \geq 7$ , namely Classical type or Cartan type simple Lie algebras.

\

\section{ Conjectures}

\

\

We found out some counter examples to the nonrestricted representation theory of  $C_{l}$-type Lie algebras in [3].\newline

 Furthermore
a  Lie algebraist J{\"o}rg Feldvoss reviewed the paper [1] in the affirmative. He agreed tacitly to the fact that any $A_l$ type modular Lie algebra with $l\geq1$ becomes a Park's Lie algebra and so a Hypo Lie algebra over an algebraically closed field $F$ of  characteristic $p\geq7$.\newline

 In other words, for any nonzero character $\chi$  we have  
 $W_{\chi}(L)$=
$V_{\chi}(L)$    for $L= A_l$- type simple Lie algebras  except for a finite number of  restricted  irreducible $L-$modules, where the left hand side is a Weyl module of $L$  and the right hand side is a Verma module of $L$  associated with a character  $\chi.$ \newline

So we could give a conjecture as follows, motivated by this fact in [1] and section2 of this paper and references [2],[3]:\newline

[$\textbf {CONJECTURE} $]  Suppose that $L$ is a Lie algebra of classical type over an algebraically closed field $F$ of characteristic $p\geq7$, and that $L'$ is another finite dimensional simple
Lie algebra containing $L$ with a CSA $H'$ such that $H' \cap L $ is a CSA of $L$.\newline

Then (i)$L$ + $H'$ is a subalgebra of $L'$;

(ii) if $L$ +$H'$ is an  indecomposable subalgebra of $L'$, then $L$ + $H'$ becomes a  $Hypo$- Lie algebra 
and the maximal dimension of irreducile ($L$ +$H'$)-modules equals that of irreducible $L$-modules.

\

\bibliographystyle{amsalpha}

\end{document}

\begin{align*}
1.&  {\rm  \ the\ P \ versus \ NP \ problem} \\
2. & {\rm \ the \ Riemann \ Hypothesis} \\
3. & {\rm \ the \ Poincare \ conjecture} \\
4. & {\rm \ the \ Birch \ and \ Swinnerton-Dyer \ conjecture}\\
5. & {\rm \ the \ Hodge \ conjecture}\\
6. & {\rm \ the \ Navier - Stokes \ Equation}\\
7. & {\rm \ the \ Yang \ Mills \ Existence \ and \ mass \ gap}\\
\end{align*}
Grigori  Perelman(1966.6.13 \ - \ ) was known to have solved the 3rd problem officially, while we  announced that we suggested [NWK] publicly  as a solution of the 1st problem $P$ versus $NP.$ But we understand that it is not yet publicized officially by CMI. 

Recently in September, 2018, Sir M. F. Atiyah expressed before the press that he himself solved the 2nd problem  Riemann Hypothesis and a well known tv broadcasting station showed this presentation live on air. Up to now we are not well aware whether it is true or not, though. 

Suddenly he was dead after this affair in January 11, 2019 the next year. We would like to express condolences to his family anyway.  Since Atiyah's paper is still under consideration, we made up our minds to solve the Riemann hypothesis directly by ourselves. Apart from Atiyah's paper, this paper could be at least another way to suggest the proof of the Riemann Hypothesis.

\

\section{elementary zeta function}

\

\

It is well known even from high school that the series 
\begin{equation}\nonumber
\sum_{n=1}^{ \infty} \ { 1\over n^s} \ = \ 1 \ + \  { 1\over 2^s} \ + \ { 1\over 3^s} \  + \ \cdots \ +  \ { 1\over n^s} \ + \ \cdots 
\end{equation}
for $s  >  1 $ is convergent, while for $s  \leq  1$ divergent.

Euler expressed this series as 
\begin{equation} \nonumber
\sum_{n=1}^{ \infty} \ { 1\over n^s} \ = \  \prod_{p}  \   {1  \over 1-p^{-s}},
\end{equation}{}
where $p$ runs over all prime  numbers.

B. Riemann observed that this series may be extended to a meromorphic function defined over all complex plane by the so called, analytic continuation method. So in 1859 he studied this series at first by the form 
\begin{equation} \nonumber
 \zeta  (s)={1 \over \Gamma (s)} \int_0^{\infty} {x^s \over e^x -1} \ dx ,  \ 
\end{equation}

where $ \Gamma $ stands for the Gamma function (see $\S 4$ this paper) which is a meromorphic function on the whole complex plane. Later this was called Riemann zeta function. Now putting
\begin{equation} \nonumber
  \zeta  (s)={1 \over \Gamma (s)} \int_0^{\infty} {x^s \over e^x -1} \ dx   \ 
\end{equation}
 for $Re(s)>1$ 
\begin{equation} \nonumber
\Gamma (s)= \int_0^{\infty} {x^{s-1} \over e^x } \ dx   \ 
\end{equation}
with $Re(s) > 0 ,$ we have
\begin{align*} \nonumber
 \int_{0}^{\infty}  {x^{s-1}  \over e^x -1} dx    
= &  \int_{0}^{\infty} {x^{s-1} e^{-x}  \over 1- e^{-x} }  dx   \\
= &  \int_{0}^{\infty}  \sum_{n=0}^{ \infty} { x^{s-1}  \ e^{-(n+1)x} }  dx \\
= &   \sum_{n=0}^{ \infty}  \int_0^{\infty} {x^{s-1} e^{-(n+1)x}}  dx \\
= &   \sum_{n=0}^{ \infty}  { \Gamma (s)  \over (n+1)^s } \\
= &  \Gamma (s)   \zeta  (s) \\
\end{align*} \nonumber
for $Re (s) >1.$
Hence 
\begin{equation} \nonumber
 \zeta  (s)= {1 \over \Gamma (s)} \int_0^{\infty} {x^{s-1} \over e^x -1} \ dx    
\end{equation}
for $Re (s) >1$ is obtained.

\

\section{gamma function}

\

\

it is known that the so called Gamma function 
\begin{equation} \nonumber
\Gamma (s)= \int_0^{\infty} {x^{s-1} \over e^x } \ dx    
\end{equation}
is convergent for $Re (s)>0.$
We may extend this function to the whole complex plane by defining 
\begin{equation} \nonumber
\Gamma (s): ={1 \over s} \  \Gamma (s+1).    
\end{equation}
This extended function becomes a meromorphic function with simple poles at $s=0,-1,-2, \cdots, ,-n, \cdots .$
It is also known to be never zero on $\Bbb C.$
Such a meromorphic function is very useful to make relationship with other important functions, e.g., Beta function, incomplete Gamma function, incomplete Beta function, polygamma function etc. 

We have seen in $\S 3$ that $$ \zeta   (s) = \sum_{n=1}^{\infty} n^{-s}$$ may be redefined by 
\begin{equation} \nonumber
 \zeta  (s):={1 \over \Gamma (s)} \int_{0}^{\infty} {x^{s-1} \over e^{x}-1}dx    
\end{equation}
for 
$Re (s) >1.$ }

\

\section{analytic continuation of Riemann zeta function}

\

\

Riemann observed that the zeta function defined in $ \S 4$ may still be extended to the one by analytic continuation which is defined at any point $\in \Bbb C$. It is analytic at any point in the whole  complex plane except for the point $s=1$ which has a simple pole.

 We have various analytic continuations of the Riemann zeta function $ \zeta  (s).$ We exhibit just 2 kinds out of them.

If we are given any convergent alternating series of complex numbers   
$$S=s_1 -s_2 +s_3 - \cdots ,$$ then we may change it to the form 
\begin{equation} \nonumber
S={1 \over 2} s_1 +{1 \over 2} \big[ (s_1 -s_2 )- (s_2  -s_3) +(s_3 -s_4 )- \cdots  \big]
\end{equation}
Put $\triangle^0 s_n =s_n $
and 
\begin{equation} \nonumber
\triangle^k s_n  = \triangle^{k-1} s_n - \triangle^{k-1} s_{n+1}  
=\sum_{m=0}^{k} (-1)^m  {k \choose m} s_{m+n}
\end{equation}
 for $k \geq 1$.

Writing 
$$ \zeta  (s) =\sum_{n=1}^{\infty} n^{-s}$$
for $Re  (s) >1$
as 
\begin{equation} \nonumber
 \zeta  (s)- 2 \cdot 2^{-s}  \zeta  (s) =1^{-s} -2^{-s}+ 3^{-s} - \cdots,
\end{equation}
we are informed that such an alternating series is convergent for $Re (s) > 0.$

 \begin{prop}\label{thm5.1}
We may write the analytic continuation of $ \zeta  (s)$ as   
\begin{equation} \nonumber
 \zeta  (s) = (1-2^{1-s})^{-1}  \sum_{j=0}^{\infty} {\triangle^j 1^{-s} \over 2^{j+1}}
\end{equation} 
for all complex number $s \not= 1,$
which converges absolutely and uniformly on compact sets in the complex plane
 $ \Bbb C $.  So $ \zeta  (s)$ is analytic over the whole complex plane except for a simple pole at  $s=1$.
  \end{prop}
\begin{proof}
See [JS], theorem in $\S 3$.
\end{proof}

 We may have other variant forms of this analytic continuation, among which the author chose an excellent one in [CK].

\begin{prop}\label{thm5.2}
Let $s =\sigma +i  t $ for $\sigma , t  \in \Bbb R$ as usual. 
We have respectively
\begin{equation} \nonumber
  \zeta  (s) =s  \int_{1}^{ \infty} {{[x]-x+ {1  \over 2} \over x^{s+1}}}dx+ {1  \over s-1}+{1  \over 2}   \left(=  \int_{0}^{\infty} {x^{s-1}  \over e^{x}-1}dx  \right)   \ if  \ \  \sigma >1 ,
\end{equation}
\begin{align*} \nonumber
		 \zeta  (s)=& s \int_{0}^{\infty} {[x]-x  \over x^{s+1}} \ dx \ \ if \ \ 0< \sigma <1, \nonumber\\
			 \zeta  (s) =&  s \int_{0}^{\infty} {[x]-x+{1 \over 2}  \over x^{s+1}} \ dx \ \ if \ \ -1 <\sigma <0. \nonumber\\
	\end{align*} \nonumber
Near $s=1,$ 
\begin{equation} \nonumber
 \zeta  (s) ={1 \over s-1} +\gamma + O(|s-1|), 
\end{equation}
where 
\begin{equation} \nonumber
\gamma = \lim_{n \to \infty } \left( 1-log \ n + \sum_{m=1}^{n-1} {1 \over m+1 }\right).
\end{equation} 
\end{prop}

\begin{proof}
For $\sigma >1,$ we get 
\begin{align*} 
	 \zeta  (s)=&  \int_{0}^{\infty} {x^{s-1}  \over e^x -1} \ dx \ \  \nonumber\\
	=&  s \int_{1}^{\infty} {[x]-x+{1 \over 2}  \over x^{s+1}} \ dx \  + \ {1 \over s-1} +{ 1 \over 2}, 
\end{align*} 
by virtue of $\S 3$ Analytic continuation of $ \zeta  (s),$ Lecture 11 [CK].

Now we notice that $[x]-x+ {1 \over 2}$ is bounded and so the integral on the right hand side for $\sigma >1$ is also convergent for $Re (s) =\sigma >0,$ and uniformly in any finite region to the right of $ \sigma =0.$ So the analyticity of $ \zeta (s)$
extends to the region $ \sigma >0.$

However we compute easily 
\begin{equation} \nonumber
   \int_{0}^{1} {[x]-x \over x^{s+1}} \ dx 
 = -   \int_{0}^{1} x^{-s} \ dx \  = \ {1 \over s-1} \ \ for \  \ 0 < \sigma <1
\end{equation} \nonumber
 and 
\begin{equation} \nonumber
{s \over 2} \int_{1}^{\infty } {1 \over x^{s+1}} \ dx 
={1 \over 2}  \  \ for \ \  0 < \sigma < 1.
\end{equation} \nonumber
So we have 
\begin{equation} \nonumber
 \zeta  (s) = s \int_{0}^{\infty} {[x]-x  \over x^{s+1}} \ dx \ \ for  \ \ 0 < \sigma < 1 .
\end{equation} \nonumber
For the rest proof of other cases, refer to the same site in [CK].
\end{proof}
\begin{rem}\label{remark5.3}
For $s =\sigma +i  t $ with $\sigma , t  \in \Bbb R$ as above, let $\sigma = Re (s) >0.$ 

We then have 
$$ \zeta (s)={s \over s-1} -s \int_{1}^{\infty}{x-[x] \over x^{s+1} }dx$$ 
with  $\sigma = Re (s) >0.$

For, we see  
$$ \zeta (s)=s \int_{1}^{\infty}{[x]-x +{1 \over 2} \over x^{s+1} }dx +{1 \over s-1}+{1 \over 2}$$
from proposition 5.2 if $\sigma >1 .$ 
So 
 \begin{align*} 
 	 \zeta (s) &= -s \int_{1}^{\infty}{x-[x]-{1 \over 2} \over x^{s+1} }dx +{1 \over s-1}+{1 \over 2}	
 \ \ with  \ \ \sigma >1    \  \ \nonumber\\
&= -s \int_{1}^{\infty}{x-[x] \over x^{s+1} }dx +{1 \over 2} \ s  \int_{1}^{\infty} {1 \over x^{s+1}}dx + {1 \over s-1}+{1 \over 2}  \  \ \nonumber\\
&= -s \int_{1}^{\infty}{x-[x] \over x^{s+1} }dx +{1 \over 2} \ s ({1 \over s})+ {1 \over s-1}+{1 \over 2}  \  \ \nonumber\\
&= -s \int_{1}^{\infty}{x-[x] \over x^{s+1} }dx + {1 \over s-1}+ 1  \  \ \nonumber\\
&= -s \int_{1}^{\infty}{x-[x] \over x^{s+1} }dx + {s \over s-1}  
\ \ with  \ \ \sigma >1 . \  \ \nonumber\\
\end{align*}
Now that $0 \leq x-[x] \leq 1$ is obviously bounded, the integral on the right hand side for $ \sigma >1$ is also convergent for $Re (s) >0,$ and uniformly  in any finite region to the right of $\sigma =Re (s) =0.$ 

Hence the analyticity of $\zeta (s)$ extends naturally to the region $\sigma =Re (s) >0$ with the exception $s=1$ at which $\zeta (s)$ has a simple pole.
\end{rem}	
\

\

\

\ 

\section{completed zeta function}

\

\

By making use of the analytic continuation of $ \zeta  (s),$ we define 
\begin{align*} 
	\xi (s):=& {1 \over 2} s(s-1) \  \prod {}^{-{s \over 2}} \ \Gamma ({1 \over 2 }s)  \   \zeta  (s)  \   \nonumber\\
	=& (s-1) \ \prod {}^{-{s \over 2}} \ \Gamma ({1 \over 2 }s+1 ) \   \zeta  (s),  
\end{align*} 
where we used the identity 
\begin{equation} \nonumber
\Gamma ({1 \over 2}s+1 ) = {1 \over 2 }  \ s   \ \Gamma ({1 \over 2 }s) 
\end{equation} \nonumber
for the equality. 

 It is well known that B. Riemann gave us a functional equation $\xi (s) = \xi (1-s)$ which is an entire function on $\Bbb C$.
We call $\xi (s)$ the {\it completed zeta function}.

 By virtue of this relation, we may have the relationship between $ \zeta  (s)$ and $ \zeta  (1-s).$
For the analyticity  of $ \zeta  (s)$, we may refer to [JS].

Completed zeta function gives rise to trivial zeros of the Riemann zeta function. We shall use this in the final section.

\

\section{proof of Riemann hypothesis}

\

\

Now we are prepared to specify the complete proof of Riemann Hypothesis. By virtue of functional equation, we have zeros of $ \zeta  (s)$ at $s= -2n \ (n=1,2, \cdots ),$
 which are called trivial zeros of $ \zeta  (s).$

\begin{prop}\label{thm7.1}
We have no nontrivial zero of $ \zeta  (s)$ outside of the critical strip $0<  \ Re (s) < 1$.
\end{prop}
\begin{proof}
The theorem of Hadamard and de la Vallee-Poussin [HJ], [PC] shows that there exist no zeros on the line $Re (s)=1.$

Since we are well aware that there exist no zeros to the right of the critical strip, we see that we have no nontrivial zeros in the half plane $Re (s) \geq 1.$

So if there were to be a nontrivial zero in the half plane $Re (s) < 0,$ then we would have a corresponding zero in the half plane $Re (s) >1$ by virtue of the functional equation 
$$\xi (s) \ = \ \xi (1-s) \ \forall s  \in \Bbb C.$$
So we meet a contradiction.
\end{proof}

\begin{definition}\label{thm7.2}
Let a function $  f(x)$ be a complex valued function defined on an interval in $(0,1]$. We shall call $ f (x)$ a {\it crescent function}  on this interval if it satisfies 
$$ Re f(x) -{1 \over  x}>0  \text{ and } \left(  Re  f(x) \right)'  <0$$
or $$Im f(x) - {1 \over x}>0 \text{ and } \left(  Im  f(x) \right)'  <0.$$

In this case if we put $g(x): = f(x)  \triangle x,$ then
$$|g(x)|= | f(x)  \triangle x| >  {1 \over x} \triangle x,$$
so that 
\begin{equation} \nonumber
\lim_{\triangle x \to 0} \ |g(x)| \geq \lim_{\triangle x \to 0} \ {1 \over x} \triangle x 
\end{equation}
on this interval.
\end{definition}
Furthermore near $O+$ on the subinterval of this interval, the ratio of $\triangle x$ getting smaller is much less than the ratio of $f(x)$ getting bigger as $x$ tends to the left of this interval.  Hence we shall use such a function in $ \S 7$ to prove the Riemann hypothesis. 

Finally we are ready to prove our 
\begin{mprop}\label{ thm7.3}
We have no zero of $ \zeta  (s)$ in the critical strip except for the critical line $Re (s) = {1 \over 2}.$
\end{mprop}
\begin{proof}

Suppose that we have a nontrivial zero at $s$ inside of the critical strip, i.e., $  \zeta  (1-s) =0.$ 
So $$ 0=  \zeta  (s) =  \zeta  (1-s)$$ by $ \S 6.$ 

Equating both sides and using (5.2), we get   
\begin{align*} \nonumber
0=	{ \zeta  (s) } &=  s \int_{0}^{\infty} {[x]-x  \over x^{s+1}} \ dx ,  \ \ for \ \ 0< \sigma <1  \nonumber\\ 
& =	(1-s)   \int_{0}^{\infty} {[x]-x  \over x^{2-s}} \ dx., \nonumber\\
\end{align*} \nonumber
Hence 
$$
	0=	{\zeta (s) \over s }=  \int_{0}^{\infty} {[x]-x  \over x^{s+1}} \ dx \ \ $$
	 \ and
	 $$
	0= { \zeta (1-s) \over (1-s)} = \int_{0}^{\infty} {[x]-x  \over x^{2-s}} \ dx  $$
are obtained. 

We consider 
\begin{align*} \nonumber
	0=&  \int_{0}^{\infty} {[x]-x  \over x^{s+1}} \ dx + \int_{0}^{\infty} {[x]-x  \over x^{2-s}} \ dx  \   \nonumber\\
	=& \int_{0}^{1} {[x]-x  \over x^{s+1}} \ dx  + \int_{1}^{\infty} {[x]-x  \over x^{s+1}} \ dx  
	 + \left( \int_{0}^{1} {[x]-x  \over x^{2-s}} \ dx  + \int_{1}^{\infty} {[x]-x  \over x^{2-s}} \ dx \right)   \nonumber\\
	=&- \left( \int_{0}^{1} {x  \over x^{2-s}} \ dx  + \int_{0}^{1} {x  \over x^{s+1}} \ dx \right)  + \left(  \int_{1}^{\infty} {[x]-x  \over x^{s+1}} \ dx  + \int_{1}^{\infty} {[x]-x  \over x^{2- s}} \ dx \right) . \nonumber\\
\end{align*} \nonumber
So 
\begin{align*} \nonumber
\lim_{\delta \to \infty} \left( \int_{\delta}^{\infty} {[x]-x  \over x^{s+1}} \ dx + \int_{\delta}^{\infty} {[x]-x  \over x^{2-s}} \ dx  \right)=0 \   \nonumber\\
\end{align*} \nonumber
and
\begin{align*} \nonumber
	\lim_{\delta \to 0} \left( \int_{0}^{\delta} {[x]-x  \over x^{2-s}} \ dx + \int_{0}^{\delta} {[x]-x  \over x^{s+1}} \ dx  \right)=0. \   \nonumber\\
\end{align*} \nonumber
At this stage we assume $$Re (2-s) > Re (s+1)$$ with $0< Re (s) <1,$

in other words $$Re (1-s) > Re (s) >0, \ {\rm i.e.,} \  0< Re (s) <{1 \over 2}. $$
Next
\begin{align*} \nonumber
	\lim_{\delta \to 0} \left( \int_{0}^{\delta}{1 \over x^{1-s}} \ dx + \int_{0}^{\delta} {1  \over x^{s}} \ dx  \right)=0 \   \nonumber\\
\end{align*} \nonumber
has integrands ${1 \over x^{1-s}} \ , \ { 1 \over x^s }$
respectively.

 We further assume that on the interval $[\alpha_1   ,   \alpha_2 ] \subset   (0    ,  1]$ vectors of  ${1 \over x^{1-s}}$ belong to the first quadrant of the complex plane $\Bbb C$. 
 
 If $ x \longrightarrow \alpha_{1^+} $  
for $x   \in   [\alpha_1    ,   \alpha_2 ], $
then the absolute values of the vectors of ${1 \over x^{1-s}}$ increase strictly,  while  the angles between the vectors of ${1 \over x^{1-s}}$ and the positive real axis decrease strictly. 

In the mean time, the vectors of ${1 \over x^{s}}$ belong to the $4 th$ quadrant of the complex plane $\Bbb C.$

Since  $$s = \sigma +  i  t , \ -s = - \sigma - i  t,$$
we know that the vectors of ${1 \over x^{s}}$ are symmetric to those of ${1 \over x^{1-s}}$ with respect to the positive real axis apart from length of vectors.
Even though the absolute values of vectors of ${1 \over x^{s}}$ are strictly increasing, each corresponding one to the vector ${1 \over x^{1-s}}$ has less length than the vector ${1 \over x^{1-s}}$.

If $f(x)$ is an integrable vector function on an interval $(a  ,  b],$
then 
\begin{equation} \nonumber
\int_{a}^{b}  f(x) \ dx  = \lim_{n \to \infty } \sum_{i=1}^{n}
\left( f(x_i) \ \triangle x \right), 
 \end{equation}
 where $\triangle x ={ b-a \over n}$ may be taken.

 Put $$g(x) := f(x) \ \triangle x,$$ where 
 $$\forall  \ \varepsilon >0 , \ 
 \   \ 
\exists \ \delta >0$$ 
such that $$\triangle x < \delta  \Longrightarrow \left| \sum_{i=1}^{n} g(x_i )   
 -\int_{a}^{b}  f(x) \ dx \right| < \varepsilon $$ for $x_i  \in \ \text{ interval} \ (b- i\triangle  x \ , \ b-(i-1) \triangle  x) \ $
with $a+n \triangle x =b. $
So choose such a $\triangle x$ for ${b_i \over x^{1-s}}+ {b_j \over x^s}$ on $(0  ,  a]  \subset (0 ,  1]$

and put
\begin{equation} \nonumber
g_{b_i }^{b_j} (x) := \left( {b_i \over x^{1-s}}+ {b_j \over x^s}  \right) \triangle x , 
\end{equation}
where $b_i , \ b_j$ are real constant in $\Bbb R^+$. 

We thus have
\begin{equation} \nonumber
\left|  \sum_{i'=1}^{n} \left( {b_i \triangle x \over x_{i'}^{1-s}}+ {b_j  \triangle x \over x_{i'}^s}  \right) - \int_{0}^{a} \left( {b_i \over x^{1-s}}+ {b_j \over x^s}  \right) dx  \right| < \varepsilon  
\end{equation}
since 
\begin{equation} \nonumber
\lim_{n \to \infty }\sum_{i'=1}^{n} g_{b_i}^{b_j}(x_{i'} ) = \int_{0}^{a} \left( {b_i \over x^{1-s}}+ {b_j \over x^s}  \right) dx.
\end{equation}

Now let $[a_1 , a_2 ] \subset (0,a]$ be given so that all vectors of 
${1 \over x^{1-s}}$ belong to the first quadrant of $\Bbb C$ and so all vectors of ${1 \over x^{s}}$ belong to the fourth quadrant. We see  
$$\left| \sum_{i=k}^{\ell} g_{b_1}^{b_2}(x_i ) \right| \not= 0$$ 
for $ g_{b_1}^{b_2}(x )$ defined on the interval $[a_1 , a_2 ]$.

 Further let  $[a_3 , a_4 ] \subset (0,a_2 ]$ be chosen with $a_4 < a_1 $ so that all vectors of ${1 \over x^{1-s}}$ on $[a_3 , a_4  ]$
belong to the first quadrant and for some constants $b_1 , b_2 , b_3 , b_4, $ 
$$\left| g_{b_3}^{ b_4}  (a_4 ) \right| > \left| \sum_{i'=1}^{n} g_{b_1 }^{b_2} (x_{i'} ) \right|,$$
where ${ g_{b_3}^{b_4}(x) \over \triangle x}$ is a crescent function on $[a_3 , a_4 ]$.
If we have only one term integrand ${b_i \over x^{1-s}}$, then there always exist  $\triangle x$ such that 
\begin{equation} \nonumber
\left| g_{b_3}^{0}(x_{i'} ) \right| < \left| \sum_{i' =1}^{n} g_{b_1}^{b_2}(x_{i'})\right| .
\end{equation}
So we can't assert that there is such $a_4$.

 In the case of 2 term integrands as above, one of $b_i , b_j$ is independent of the other. 

For the necessary part for the existence of $a_4$, we need the graph of $Re ({b_i  \over x^{1-s}} + {b_j \over x^s })$ which satisfies 
 $Re ({b_i  \over x_0^{1-s}} + {b_j \over x_0^s }) ={1 \over x_0}$
 at a point $x_0 \in (0,1]$ increasing above $1 \over x$
 as $x$ tends leftward to a critical point $x_1$  such that 

\begin{equation} \nonumber
{d \over dx} \left\{  Re ({b_i  \over x^{1-s} } + {b_j \over x^s })-{1 \over x } \right\} (x_1 ) =0.
\end{equation} 
 
 Since $ | g_{b_3}^{ b_4}  (x) |$ is strictly increasing for $b_3 , b_4  \in  
 \Bbb R^+$ as $x \longrightarrow a_3^+ $ on this interval $[a_3 , a_4 ],$ we obtain 
 $|\sum_{i=h}^{m} g_{b_3}^{b_4 }(x_i )|$ on  $[a_3 , a_4 ]> |\sum_{i=k}^{\ell} g_{b_1}^{b_2 }(x_i )|$
on $[a_1 , a_2 ].$ 

We thus have 
 \begin{align*} \nonumber
 	& \left| \int_{a_3}^{a_4}  \left( { b_3  \over x^{1-s}} + { b_4  \over x^s}\right)  dx \right| = \left| \lim_{\triangle x \to 0 }\sum_{i=h}^{m} g_{b_3}^{b_4}(x_i ) \right| \   \nonumber\\
 	> \ & \left| \lim_{\triangle x \to 0 }\sum_{i=k}^{\ell} g_{b_1}^{b_2}(x_i ) \right|  = \left| \int_{a_1}^{a_2} \left({b_1   \over x^{1-s}}   +{b_2 \over x^{s}} \right)\ dx \right|  \ \not= \  0 \  \nonumber\\
 \end{align*} \nonumber
 for some interval $[a_3 , a_4 ]$ to the left of $[a_1 , a_2 ]$ and for some $b_3 , b_4 \in \Bbb R^+ .$

 Proceeding in this manner we get
 \begin{align*} \nonumber
 	 \cdots > & \left| \ \int_{a_i}^{a_{i+1}}  \left( { b_i  \over x^{1-s}} + { b_{i+1}  \over x^s}\right)  dx \ \right| \ > \cdots  \   \nonumber\\
 	>  & \left| \ \int_{a_1}^{a_2} \left({b_1   \over x^{1-s}}   +{b_2 \over x^{s}} \right)  dx \ \right| \not= 0. \  \quad\quad\quad\quad\quad  (\ast) \nonumber\\ 
 \end{align*} \nonumber
We have to keep in mind that 
\begin{align*} \nonumber
	& \lim_{\delta \to 0}  \int_{0}^{\delta} \left(  {b_i  \over x^{1-s}}  +  {o b_j  \over x^{s}} \right) dx   \nonumber\\
	 =& \lim_{\delta \to 0}  \int_{0}^{\delta} \left(  {b_i  \over x^{1-s}}  +  {b_j  \over x^{s}} \right) dx  = 0 \quad\quad\quad\quad\quad\quad\quad   (\ast   \ast)  \nonumber\\
\end{align*} \nonumber
and also
\begin{align*} \nonumber
	& \lim_{b_j \to \infty}\lim_{\delta \to 0}  \int_{0}^{\delta} \left(  {b_i  \over x^{1-s}}  +  {o b_j  \over x^{s}} \right) dx   \nonumber\\
	=&  \lim_{b_j \to \infty} \lim_{\delta \to 0}  \int_{0}^{\delta} \left(  {b_i  \over x^{1-s}}  +  {b_j  \over x^{s}} \right) dx  = 0. \ \quad\quad\quad\quad\quad  (\ast   \ast  \ast)  \nonumber\\
\end{align*} \nonumber
Here in ($\ast \ast \ast$) we must note that $\triangle x$ is related near $O+$ only to $b_i$ in the left hand side regardless of $b_j$ and so is in the right hand side regardless of $b_j$ because of equality of $( \ast \ast)$ above.
So we can make $\triangle x$ run before  $ 1 \over b_i$ near $O+$ while we can make ${ 1 \over b_j}$ run before $\triangle x$ near $O+$. 

Hence we see due to  $( \ast \ast \ast) $ that $\lim_{b_j \to \infty } g_{b_i}^{0}(x)$ is bounded in the left hand side for sufficiently small neighborhood of $O+$, while $\lim_{b_j \to \infty } g_{b_i}^{b_j}(x)$ is not bounded by dint of $(\ast),$ i.e., unbounded in the right hand side for sufficiently small neighborhood of $O+$. 

For, if $\lim_{b_j \to \infty } g_{b_i}^{b_j}(x)$ is bounded near $O+$, then it should be bounded irrelevant to the way $x$ tends to $O+$.  Of course there are $3$ possibilities  for $\lim_{b_j \to \infty } g_{b_i}^{b_j}(x)$ according to the way that $x$ tends to $O+:$ boundedness or unboundedness or indefiniteness.

 After all we meet a contradiction by the above argument.

  So we must have $Re (1-s) \leq Re (s)$.
  
   Likewise for the case 
 $Re (1-s) < Re (s)$, we also meet a contradiction 
 \begin{align*} \nonumber
 	\lim_{\delta \to 0}  \int_{0}^{\delta} \left(  {b_i  \over x^{s}}  +  {b_j  \over x^{1-s}} \right) dx  \not= 0 \   \nonumber\\
 \end{align*} \nonumber
 for some $b_i ,b_j \in \Bbb R^+$. So we must have 
 $$Re (1-s) = Re (s),$$ so that $$Re (s) = {1 \over 2}.$$
 
  Note that any linear combination of vectors in a quadrant remains in the same quadrant.
 
 For the case $Re (1-s) > Re (s),$ the resultant of a vector of $1 \over x^{1-s}$ and the corresponding vector of $1 \over x^{s}$ remains in the first quadrant on our chosen segments of $x,$ whereas for the case $Re (1-s) < Re (s)$ the resultant of a vector of  $1 \over x^{s}$ and the corresponding vector of $1 \over x^{1-s}$ remains  in the fourth quadrant. 
 
 One thing more should be remembered for our understanding of the computation of integration. 
 
 We have
 \begin{equation} \nonumber
\int_{0}^{\infty}  \ =0  \ \Longrightarrow  \ \lim_{\delta \to 0 }  \int_{0}^{\delta}  \ =0  \ \Longleftrightarrow  \ \lim_{\delta' \to 0 } \lim_{\delta \to 0 }\int_{\delta'}^{\delta}  \ =0,
 \end{equation}
 whereas we have however $\int_{0}^{\infty}  \  \not= 0$ does not necessarily imply 
$ \lim_{\delta \to 0 }  \int_{0}^{\delta}  \ = 0.$

Finally we would like to inform the readers of the fact that Godfrey Harold Hardy FRS (1877.2.7 - 1947.12.1) proved in 1914 that infinitely many zeros of $\zeta (s)$ exist on the critical line. Refer for more information to ``Sur les zer$\acute o$s de la fonction $\zeta (s)$ de Riemann" Comptes rendus hebdomadaires  des s$\acute e$ances del'Acad$\acute e$mie des sciences, 1914.
\end{proof} \newline

We have a close relationship between prime numbers and the Riemann zeta function.
It is well known that the real valued zeta function is equal to the Euler product. Using this is very helpful for this relationship. For, we consider 
\begin{equation} \nonumber
\zeta (n) =\sum_{m=1}^{\infty} {1 \over m^n} = 1+ {1 \over 2^n} + {1 \over 3^n } + \cdots \cdots .
\end{equation}
Putting in for $n=1,$ we have the divergent harmonic series. In 1737, Euler proved that 
\begin{equation} \nonumber
\begin{split}
&\cdots \left( 1- {1 \over 13^n}\right)\left( 1-{1 \over 11^n}\right) \left( 1-{1 \over  7^n}
\right)\left( 1-{1 \over 5^n}\right) \\
& \ \ \ \  \  \ \times \left( 1-{1 \over 3^n}
\right) \left( 1-{1 \over 2^n}
\right) \ \zeta (n) =1
\end{split}
\end{equation}
 for any integer $n>1$.

\

\

{\bf{Acknowledgement.}} Special thanks are due to professor Ki-Bong Nam, Shuanhong Wang and Joseph Shelton Repka for their warm hearts. Furthermore this paper could not be formulated without the help of Dr. SeokHyun Koh. Real thanks are due to him. 

\bibliographystyle{amsalpha}

\end{document}